\newtheorem{Thm}{Theorem}[section]
\newtheorem{Prop}[Thm]{Proposition}
\newtheorem{Def/Thm}[Thm]{Definition/Theorem}
\newtheorem{Lemma}[Thm]{Lemma}
\theoremstyle{remark}
\newtheorem{Def}[Thm]{Definition}
\newtheorem{Rmk}[Thm]{Remark}
\numberwithin{equation}{section}
\newcommand{\ti }{\times}
\newcommand{\ot }{\otimes}
\newcommand{\ra }{\rightarrow}
\newcommand{\Hom }{{\mathrm{Hom}}}
\newcommand{\tr }{{\mathrm{tr}}}
\newcommand{\rank }{{\mathrm{rank}}}
\newcommand{\cO}{{\mathcal{O}}}
\newcommand{\cV}{{\mathcal{V}}}
\newcommand{\cT}{{\mathcal{T}}}
\newcommand{\NN}{{\mathbb N}}
\newcommand{\PP }{{\mathbb P}}
\newcommand{\CC }{{\mathbb C}}
\newcommand{\ZZ }{{\mathbb Z}}
\newcommand{\XXX}{{X/\!\!/\!\!/G}}
\newcommand{\ka }{{\alpha}}
\newcommand{\kl }{{\lambda}}
\newcommand{\Tot}{\mathrm{Tot}}
\newcommand{\affW}{W/\!\!/_{\!0} G}
\newcommand{\bM}{{\bf M}}
\newcommand{\Rep}{\mathrm{Rep}_{C}(\overline{Q}, {\bf M}, K)}
\newcommand{\Repkl}{\mathrm{Rep}_{\lambda, C}(\overline{Q}, {\bf M}, K)}
\newcommand{\Repklo}{\mathrm{Rep}_{\lambda=0, C}(\overline{Q}, {\bf M}, K)}
\title[Twisted Quasimaps]{Stable quasimaps to holomorphic symplectic quotients}
\author[B. Kim]{Bumsig Kim}
\address{School of Mathematics \\
Korea Institute for Advanced Study \\ 85
Hoegiro, Dongdaemun-gu, Seoul, 130-722 \\
Republic of Korea}
\email{bumsig@kias.re.kr}
\subjclass[2010]{Primary 14N35; Secondary 14H60}
\keywords{GIT, Quasimaps, Holomorphic symplectic quotients, Twisted quiver bundles, Symmetric obstruction theories}
\begin{document}

\begin{abstract}
We study the moduli space of twisted quasimaps from a fixed smooth projective curve to a Nakajima's quiver variety
and the moduli space of $\delta$-stable framed twisted quiver bundles with moment map relations.
We show that they carry symmetric obstruction theories and when $\delta$ is large enough, they exactly coincide.
These results generalize works by D.E. Diaconescu \cite{Dia} about the ADHM quiver, in the framework of the
quasimap theory of I. Ciocan-Fontanine, D. Maulik and the author \cite{CK, CKM}.
\end{abstract}

\maketitle

\section{Introduction}
There are, so far, two classes of moduli examples which naturally carry symmetric obstruction theories:
\begin{itemize}
\item Moduli of stable objects in the abelian category of coherent sheaves on a Calabi-Yau threefold (\cite{Th, PT}).
\item Moduli of stable objects in the abelian category of representations of a quiver with relations given by a superpotential (\cite{Sz2}).
\end{itemize}

In this paper, we add one more such class: 
\begin{itemize}
\item Moduli of stable objects in the abelian category of coherent $\bf M$-twisted
quiver sheaves on a projective smooth curve $C$ associated to double quivers
with moment map relations. Here $\bM$ is a certain collection of pairs of line bundles on $C$ such that the product of each pair
is isomorphic to the canonical sheaf $\omega _C$. 
\end{itemize}

It arises as a curve counting on a Nakajima's quiver variety. 
If $C$ is an elliptic curve and $Y$ is a holomorphic symplectic quasi-projective variety, 
then the canonical perfect obstruction theory on the Hom scheme $\mathrm{Hom}(C, Y)$ of
morphisms from $C$ to $Y$
is symmetric since for a map $f:C\ra Y$,
\begin{eqnarray*}  \mathrm{ob} (f) ^* := H^1(C, f^*\mathcal{T}_Y)^* &\cong& H^0(C, f^*\Omega _Y \ot \omega _C) \\
             &\cong& H^0(C, f^*\mathcal{T}_Y) = \mathrm{def} (f), \end{eqnarray*}
where $\mathrm{ob} (f)$ is the obstruction space at $f$ and $\mathrm{def}(f)$ is the 1st order deformation space at $f$.

When $Y$ is given by a holomorphic symplectic quotient of an affine algebraic variety $X$ by a reductive complex algebraic group $G$ action, 
we can apply the quasimap construction of \cite{CK, CKM}
in order to compactify  $\mathrm{Hom}(C, Y)$ relatively over the affine quotient of $X$. 
The advantage of the construction compared to the stable map construction is that
we can keep the fixed domain curve $C$ so that the natural extension of the obstruction theory still remains symmetric (Proposition
\ref{sym}).
When the affine variety $X$ carries a torus $T$ action commuting with the $G$ action such that the fixed locus $(\XXX )^{T}$ is proper over $\CC$, then
the induced $T$-fixed locus on the moduli space of stable quasimaps is also proper over $\CC$. 
Therefore, in this case one can obtain
well-defined localization residue \lq\lq invariants" by the virtual localization \cite{GP}.

If $Y$ is a Nakajima's quiver variety, a quasimap is nothing but a quiver bundle on $C$.  
Using the idea of twisted quiver bundles,
we obtain the corresponding notion of twisted quasimap and allow any genus of $C$. We show that the moduli space of stable 
$\bf M$-twisted quasimaps carries
a symmetric obstruction theory (Theorem \ref{twisted-symmetric}) and the stability as quasimaps coincides with 
the $\delta$-slope stability of quiver bundles for $\delta \gg 0$ (Proposition \ref{asym}).
In the case of ADHM quiver, these facts have been shown by Diaconescu in \cite{Dia1, Dia}, 
which is, together with the quasimap construction \cite{CK, CKM}, the main source of inspiration for this work.   
We also show that with respect to any $\delta$-slope stability, the moduli space of stable twisted quiver bundles
carries a canonical symmetric obstruction theory (Theorem \ref{bd}), following \cite{Dia1, Dia}. 

The typical examples for double quivers  in our framework 
are ADHM quiver (\cite{Dia1, Dia, CDP1, CDP2})
and the framed ADE quivers. The wall-crossing phenomena are studied in \cite{Dia, CDP1, CDP2, KL} 
which show that the conditions corresponding to (a) (the moduli stack is analytic-locally 
the critical locus of a holomorphic function on a smooth complex domain) and (b) (the Euler-like form is numerical)
 in \S 1.5 \cite{JS} hold in our setting. 

\medskip
\noindent{\bf Acknowledgements.} The author thanks Kurak Chung,  Ionu\c t Ciocan-Fontanine, Duiliu-Emanuel Diaconescu,
Sukmoon Huh, Byungheup Jun, Hoil Kim,  Hwayoung Lee, and Davesh  Maulik  for useful discussions; 
Kai Behrend and Bal\'azs Szendr\"oi for pointing out references; and the referee for generous comments to improve
the presentation. 
This work is financially supported by NRF-2007-0093859.

\section{Holomorphic symplectic quotients}
\subsection{Symplectic quotients}
We set up holomorphic symplectic quotients suitable for our purpose.

Let $X$ be a smooth affine algebraic variety over $\CC$ equipped 
with a holomorphic (algebraic) symplectic form
$$\omega : \cT_X\ot \cT_X \ra \mathcal{O}_X.$$
Suppose a connected reductive complex algebraic group $G$ acts on $X$ as a hamiltonian action, i.e., 
the action preserves $\omega$ and there is
a so-called {\em complex} moment map $\mu: X\ra \mathfrak{g} ^*$ where
$\mathfrak{g}$ denotes the Lie algebra of $G$.
This complex moment map $\mu$ is, by definition, a $G$-equivariant algebraic morphism
such that 
for every tangent vector  $\xi$ of $X$ and every $g\in \mathfrak{g}$, the equation 
\begin{equation}\label{moment} \langle d\mu (\xi ), g\rangle = \omega (d\ka (g), \xi ) \end{equation}
holds
where $d\ka$ is the derivative of the map $\ka : G\ra \mathrm{Aut}(X)$ induced from the $G$-action on $X$. Here
$G$-action  on $\mathfrak{g}^*$ is the coadjoint representation $\mathrm{Ad}^*$.

Choose an $\mathrm{Ad}^*$-invariant element $\lambda$ in $\mathfrak{g}^*$.
Denote $\mu ^{-1}(\lambda)$ by $W$. Suppose $ W \hookrightarrow X$ is a regular imbedding
of codimension $\dim \mathfrak{g}$.
For a character $\chi \in \Hom (G, \CC ^\ti )$, let  $ \CC _{\chi}$ 
be  the $G$-representation space $\CC$ associated to  $\chi$
and let $L$ be the linearization $W\times \CC _{\chi}$. The {\em holomorphic symplectic quotient} $X/\!\!/\!\!/_{\!\lambda, \chi}G$ is defined to be a GIT quotient 
\[ W/\!\!/_{\! \chi} G := \mathbf{Proj} (\bigoplus _{k\ge 0} H^0( W, L^k)^G ) \]
(see \cite[\S 4.5]{Gin} and references therein for Nakajima's quiver variety case).

\begin{Def}
A point $p$ of $W$ is called {\em semistable} with respect to $\chi$ if there is $s\in H^0(W, L^k)^G$ for some $k >0$ such
that $s(p)\ne 0$.
The semistable point $p$ is called {\em stable} with respect to $\chi$  if the stabilizer $G_p$ is finite 
and the action of $G$ on $\{ q : s(q)\ne 0 \}$ is closed (i.e., every orbit is closed in $\{ q : s(q)\ne 0 \}$).
\end{Def}


The stable (resp. semistable) locus is the open subset of $W$ consisting of all stable (resp. semistable) points.
According to the proof of Theorem 1.10 of \cite{Mu}, 
the GIT quotient is the categorical quotient of the semistable locus $W^{ss}$
and it contains as an open subset the geometrical quotient of the stable locus $W^s$ (see also 
\cite[Definition 2.2.3, Theorem 2.2.4]{Gin}).

\medskip

The equation (\ref{moment}) shows that
the stabilizer of a point $p\in X$ is a finite group if and only if
$d\mu _{|_{T_pX}} :T_pX \ra \mathfrak{g}^*$ is surjective if and only if $p$ is a regular point of 
the moment map $\mu$.
Hence the stack quotient $[W ^s /G]$ of the stable locus $W^s$
is a holomorphic symplectic smooth DM stack.

For a 1-parameter
subgroup, i.e., a homomorphism $\kappa : \CC ^\times \ra G$, denote by $\langle \chi , \kappa \rangle$
the exponent $m$ of $t^m = \chi (\kappa (t))$, $t\in \CC ^\times$.
There is a numerical criterion for semistable and stable points due to King \cite{King}.
This  will be used in Section \ref{King}.


\begin{Prop}\label{NC} {\em (\cite{King})} 
A point $p$ is semistable (resp. stable)  if \[ \langle \chi , \kappa \rangle \ge 0, \
{\text (resp.  }\ \langle \chi , \kappa \rangle > 0 \text{)} \] for any  nontrivial 1-parameter
subgroup $\kappa$ for which the limit of $\kappa (t) \cdot p $ as $t\ra 0$ exists.
\end{Prop}
\begin{proof}
The $G$-action on $X$ is linear in the sense that
there is a $G$-equivariant closed embedding of $X$ into a $G$-linear space $V$ (see e.g. \cite{Le}, page 94).
Since $G$ is geometrically reductive, the semistability of $p$ is equivalent to the fact that  the  closure 
of the orbit $G\cdot (p, 1)$  in $W\times \CC _{-\chi}$ for $(p,1)\in W\times \CC _{-\chi}$ is disjoint from the zero section $W\times \{0\}$ as in \cite[Lemma 2.2]{King}.
This translation shows that a point $p$ of $W$ is (semi)stable if and only if it is 
(semi)stable  as a point of the $G$-space $V$ with respect $\chi$.
Now the proof  immediately follows from \cite[Proposition 2.5]{King}.
\end{proof}

\begin{Rmk} In the above, the sign convention is  correct since 
$(g\cdot s) (p) = \chi (g)^l s (g^{-1}\cdot p)$ for $s\in  H^0( W, L^l)$.
\end{Rmk}


\subsection{Symmetry}
The derivatives of $\ka$ and $\mu$ together give rise to a
commutative diagram
\[\begin{CD} 0 @>>> \cO_X\ot \mathfrak{g}  @>{d\ka}>>  
   \cT _X  @>{d\mu}>>   \cO _X   \ot  \mathfrak{g} ^*  @>>> 0  \\
@VVV @V{\mathrm{id}}VV  @V{\omega }V{=-\omega ^\vee}V @VV{\mathrm{-id}}V @VVV\\
 0@>>> \cO _X \ot  \mathfrak{g}   @>>{d\mu ^\vee}>          \Omega _X @>>{d\ka ^\vee}>   \cO _X  \ot \mathfrak{g} ^* @>>> 0 \end{CD}
 \] of $G$-sheaves due to equation (\ref{moment}).
 Here the $G$-action on $\mathfrak{g}$ is the adjoint representation $\mathrm{Ad}$.
The first horizontal line of the diagram is a $G$-equivariant  complex after the restriction 
to $W$. 
Note that the 3-term perfect $G$-equivariant complex 
\[ \mathbb{T}_{[W/G]} :=\left[\cO _X \ot \mathfrak{g}  \stackrel{d\ka}{\longrightarrow}  
   \cT _X  \stackrel{d\mu}{\longrightarrow}      \cO _X    \ot \mathfrak{g} ^*  \right] _{|_{W}} \]  concentrated in degrees $-1, 0, 1$
  is nothing but (the pullback of) the tangent complex $\mathbb{T}_{[W/G]}$ of the quotient stack $[W/G]$.
Note that the commutative diagram shows that 
\begin{equation}\label{Fsym} \mathbb{T}_{[W/G]}^\vee \cong \mathbb{T}_{[W/G]} \end{equation} as complexes of $G$-sheaves.


\section{Stable Quasimaps}

 The notion of stable quasimaps appeared for a
compactification of maps from a smooth projective curve $C$ to a suitable GIT quotient with no strictly semistable points.
We first begin with conventions and definitions.
 
 \begin{Def}
By a {\em principal $G$-bundle} $\pi : P\ra Y $ on a scheme $Y$, we mean a scheme $P$ 
with a free {\em left} $G$-action which is \'etale locally trivial, i.e, there is an \'etale surjective morphism
from a scheme $Y'$ to $Y$ making the pullback $P\times _Y Y'$ isomorphic to $G\times Y'$ as a $G$-space over $Y'$.
By a {\em morphism between two principal bundles} on $Y$, we mean a $G$-equivariant morphism over $Y$.
\end{Def}

If $P$ is a principal bundle with right action as usual, then we will consider it as a principal bundle with
left action through $G\ra G$, $g\mapsto g^{-1}$.

\begin{Def} Define a {\em degree class}  $\beta$ of $P$ as the homomorphism
 \[  \beta  : \mathrm{Hom}(G,\ZZ)  \ra  \ZZ, \ \ 
                                                          \theta \mapsto  \deg (P\times _G \CC _\theta ).  \]    
\end{Def}

 
\subsection{Quasimaps}

We are interested in 1-morphisms $[u]$ from $C$ to the quotient stack $[W/G]$.
Such morphisms correspond to pairs $(P, u)$ of 
principal $G$-bundles $P$
on $C$ and sections $u$ of fiber bundle $P\times _G W$ on $C$.
For such a pair $(P, u)$, $[u]$ is the composite of
\[\xymatrix{ C \ar[r]^-{u} &  P\times _G W =[(P\times W)/G] \ar[r]^{\ \ \ \ \ \ \ \ \ \ \ \mathrm{pr}_2}  & [W/G] },   \]
where $\mathrm{pr}_2$ is the projection to the second factor.

\begin{Def} (\cite{CK, CKM}) In this paper, 
a pair $$(P, u), \text{ i.e., }  [u] : C\ra [W/G]$$ is called a {\em quasimap} to $[W^s/G]$.
The pair is called $stable$ with a {\em rigid} domain curve $C$
if the preimage $u^{-1} (P\times _G W^{s})$ is nonempty. 
\end{Def}

In this paper, we always fix $C$.
Often, we will consider $u$ also as a $G$-equivariant map from $P$ to $W$.
Two quasimaps  $(P, u)$ and $(P', u')$ are said isomorphic if there is an isomorphism
\begin{equation}\label{auto} \xymatrix{ P \ar[rr]^{\phi}  \ar[dr] & & P' \ar[dl]  \\
             & C &  }\end{equation}  of principal $G$-bundles
preserving sections, that is, $\tilde{\phi} \circ u = u'$ where 
$\tilde{\phi}: P\times _G W\ra P'\times _G W $ is the map induced from $\phi$. 

The {\em degree class} of  $(P, u)$ is defined to be the degree class of $P$.      
The {\em moduli stack} $$QG _\beta$$ of stable quasimaps with a rigid domain $C$ and a fixed degree  class $\beta$
is an open substack of Hom stack $\Hom (C, [W/G])$ which is an Artin stack locally of finite type over $\CC$ (see  \cite[Proposition 2.11]{Lieb}).

\subsection{Some results from \cite{CKM}}
The space $QG _\beta$ is a finite type DM stack by the boundedness theorem \cite[Theorem 3.2.5]{CKM}. 
In the paper \cite{CKM}, $QG_\beta$ is denoted by  $$Qmap _{g(C),0} ([W ^s/G] , \beta; C )$$ and called the 
{\em quasimap graph space}. (Here we do not need the condition that $W^s=W^{ss}$.) 

\medskip

There is a natural morphism from $QG _\beta$ to the {\em affine quotient} $\affW:=\mathrm{Spec}\/ \CC [W]^G$ obtained by assignment:
\[ (P, u) \mapsto  \mathrm{Im}(C\stackrel{[u]}{\ra} [W/G] \ra \affW), \] where 
the composite is a constant map since $C$ is projective and the affine quotient is affine.

It is shown in \cite{CKM} that $QG _\beta$ is  proper over $\affW$ if  $W^s = W^{ss}$ and $G$ acts on $W^s$ freely
(for the proof see also \S \ref{T-action}).


\subsection{Symmetric obstruction theory}\label{symsubsection}

When $C$ is an elliptic curve, we shall show that
$QG_\beta$ carries a symmetric obstruction theory.
We begin with recalling the definition.

\begin{Def} (\cite{BF})
A perfect obstruction theory $E\ra \mathbb{L}_{\mathfrak{M}}$ for a finite type DM-stack $\mathfrak{M}$ is called {\em symmetric} if 
it is endowed with an isomorphism $\sigma : E\ra E ^\vee [1]$ 
in the derived category of coherent sheaves on $\mathfrak{M}$ such that
$\sigma ^\vee[1] = \sigma $ holds, where $\mathbb{L}_{\mathfrak{M}}$ is
the cotangent complex of $\mathfrak{M}$ relative to $\CC$.
\end{Def}

Denote the natural maps as in diagram
 \begin{equation}\label{notation}\xymatrix{
 C\times QG_\beta  \ar[r]^-{[u]}  \ar[d]_{\pi} & [W/G] \\
 QG_\beta &  }. \quad\quad     \end{equation}
We use the universal maps (e.g. $u$) by the same letters.
Now we can obtain 
an obstruction theory    
\begin{equation}\label{ob1} \mathbb{E}:=(R^\bullet\pi _* [u]^*\mathbb{T}_{[W/G]} )^\vee\ra \mathbb{L}_{QG_\beta} \end{equation}
as in \cite[Proposition 6.3]{BF1}.

Let $\mathcal{P}$ denote the universal $G$-bundle on $C\times QG_\beta$
and let $$\mathcal{P}(\mathbb{T}_{[W/G]})$$ denote the complex on $\mathcal{P} \times _G W$ 
associated to the $G$-equivariant perfect complex $\mathbb{T}_{[W/G]}$. 
It is useful to note that 
\[  [u]^*\mathbb{T}_{[W/G]} = u^*\mathcal{P}(\mathbb{T}_{[W/G]}). \]

\begin{Prop}\label{sym} If $C$ is an elliptic curve, then the obstruction theory  \eqref{ob1}
for $QG _\beta$ is a symmetric obstruction theory.
\end{Prop}

\begin{proof}  
By the Grothendieck duality and (\ref{Fsym}), 
\begin{align*} (R^\bullet\pi _* ([u]^*\mathbb{T}_{[W/G]}) )^\vee 
& \cong R^\bullet\pi _* (([u]^*\mathbb{T}_{[W/G]}^\vee )\ot\omega _{\pi}[1]) \\
 & \cong R^\bullet\pi _*( [u]^*\mathbb{T}_{[W/G]}) [1]. \end{align*} 
Denote by $\sigma$ the composite of the isomorphisms. 
We need to check that $\sigma ^\vee = \sigma [-1]$.
This can be seen by the equality 
$\mathrm{tr}(i(t_{0,0})\ot t_{0,1} ) = \mathrm{tr}(i (t_{0,1})\ot t_{0,0})$  for $t_{0,0}\in H^0(C, [u]^*\mathbb{T}_{[W/G]}),
t_{0,1}\in H^1(C, [u]^*\mathbb{T}_{[W/G]})$, where $\mathrm{tr}$ is the trace map and
 $i$ is the isomorphism $[u]^*(\mathbb{T}_{[W/G]})  \ra [u]^*(\mathbb{T}_{[W/G]}) ^\vee\ot \omega _C$.

The symmetry together with the fact that there are no nontrivial infinitesimal automorphisms
implies that $\mathbb{E}$ is a two term perfect complex in the derived category of coherent sheaves. 
 \end{proof}


\section{Twisted Quasimaps}

By twisting quasimaps, we shall
generalize the examples of symmetric obstruction theory above for any smooth projective  curve $C$.
The ideas of the  twisting in our context and Theorem \ref{twisted-symmetric} are originated from \cite{Dia1, Dia}.

\subsection{Set-up} Let $A$ be a finite index set and for $a\in A$
let $(M_{a^+}, M_{a^-})$ be a pair of  line bundles on $C$ 
equipped with an isomorphism $M_{a^+}\ot  M_{a^-}\cong \omega _C$.
For $a\in A$, consider a $G$-representation space $V_{a^+}$ and 
denote by $V_{a^-}$ the $G$-space $V_{a^+}^*$, the dual space of $V_{a^+}$.
Let $X$ be the canonical symplectic space 
$$T^*V_+=V_+\oplus V_-, \text{ where } V_+ :=\bigoplus _{a\in A} V_{a^+}.$$
Suppose that the moment map $\mu$ is the sum of each moment map  $\mu _a $, {\em bilinear} on $X_a:=T^*V_{a^+}$.


\subsection{Twisted quasimaps}
Consider the total space of a $G$-equivariant vector bundle on $C$:
\[  \bM\ot X:= \mathrm{Tot} (\bigoplus _{a\in A, i=\pm} M_{a^i}\ot V_{a^i} ) \]
and define the pullback of $\mu _a$:
\[\begin{array}{cccc}
\tilde{\mu}_a: & \Tot (\sum _i M_{a^i}\ot V_{a^i} ) & \ra & \Tot (\omega _C\ot \mathfrak{g}^*), \\
         &       m_{a^+} \ot v_{a^+} + m_{a^-} \ot  v_{a^-} & \mapsto & m_{a^+}m_{a^-}  \ot \mu _{a} (v_{a^+}, v_{a^-}) , \end{array}   \]
         which is well-defined by the bilinearity of $\mu _a$.
This in turn defines 
\[ \bM\ot W :=\tilde{\mu} ^{-1} (0), \text{ where } \tilde{\mu}:= \sum _{a\in A} \tilde{\mu}_a.\]

Note that $\bM\ot W$ is a $G$-invariant subvariety of $\bM\ot X$ (i.e., $G\cdot \bM\ot W \subset \bM\ot W$).
We denote by $\tilde{\alpha}$ the induced homomorphism $G\ra \mathrm{Aut}(\bM\ot V)$.
Locally over $C$, $\bM \ot X$ (resp. $\bM \ot W$) can be $G$-equivariantly trivialized with fiber $X$ (resp. $W$).

Let $P$ be a principal $G$-bundle and write
\[ P_\bM (X) := (P\times _C (\bM\ot X)) / G , \] which is  (the total space of) a vector bundle on
$C$. Let $u$ be a section of $P_{\bM}(X)$.

\begin{Def} 
A pair $(P, u)$ is called  a {\em $\bM$-twisted quasimap} to $\XXX$
if it is a map over $C$ from $C$ to the stack quotient $[\bM\ot W/G]$.
\end{Def}

Let $\Gamma (C, [\bM\ot W/G])$ be the stack of $\bM$-twisted quasimaps $(P, u)$: 
\[ \xymatrix{ C  \ar[r]^{(P,u)\ \ \ \ \ \ \ }\ar[rd]_{=}  & [\bM\ot W/ G]  \ar[d]^{\rho} \\
                                  & C },  \text{ with } \rho \circ (P, u) = \mathrm{id} \] for 
                                  the canonical projection $\rho$.
The stack is a closed substack of the Artin stack $\Hom (C, [\bM\ot W/G])$.

With respect to the character $\chi$, we may consider the stable locus $(\bM\ot W)^s$ and 
its quotient stack $ [(\bM\ot W)^s / G ] $.

\begin{Def} We say that a $\bM$-twisted quasimap $[u]:=(P, u)$ to $\XXX$ is {\em stable} if 
$[u]^{-1} ( [(\bM\ot W)^s / G ] )$ is nonempty.
\end{Def}

Define $$QG _{\bM, \beta}$$ to be the stack of stable $\bM$-twisted quasimaps with the degree class $\beta$.

\begin{Thm}\label{twisted-symmetric}
\begin{enumerate}
\item The stack $QG_{\bM, \beta}$  is a finite type DM stack over $\CC$ with a canonical symmetric perfect obstruction theory.

\item 
Assume that $W^s = W^{ss}$ and $G$ acts on $W^s$ freely.
Suppose that  for all $a^i$ there is a torus $T$ action on $V_{a^i}$ which is linear and commutes with the $G$ action such that  the induced 
$T$ actions on $X$ and $\bM\ot X$ preserve subvarieties $W$ and $\bM\ot W$.
If $(\affW )^T$ is isolated,
then the $T$-fixed part $QG_{\bM, \beta} ^T$ of $QG_{\bM , \beta}$ is proper over $\CC$. 

\end{enumerate}

\end{Thm}

\begin{proof}

1) Since a stable $\bM$-twisted quasimap is a rational map to a DM stack $[(\bM\ot W)^s/G]$, it has no infinitesimal automorphisms. 
 Thus the Artin stack $QG_{\bM, \beta}$ is DM. It is of finite type over $\CC$ by \cite[Theorem 3.2.5]{CKM}.
The obstruction part will be proven in \S \ref{SymmetricObs}. 

2) This will be proven in \S \ref{T-action}.
\end{proof}

 
\subsection{Obstruction Theory}\label{SymmetricObs}
Denote universal morphisms as in the following commuting diagram:
 \[ \xymatrix{ C\times QG _{\bM, \beta} \ar[r]^{[u]\ \ } \ar[rd]_{\pi _C}   \ar[d]_{\pi}  & [\bM\ot W / G ] \ar[d]^{\rho}  \\
                                       QG _{\bM, \beta}           &         C .} \]

 From this, immediately we obtain a natural obstruction theory for $QG _{\bM, \beta}$:
\begin{equation}\label{twisted-obstruction} \mathbb{E}_\bM :=(R\pi _* [u]^*\mathbb{T}_\rho)^\vee
\ra \mathbb{L} _{QG _{\bM, \beta}}. \end{equation}
Here the relative tangent complex $\mathbb{T}_\rho$ of $\rho$ is explicitly 
\[ \begin{array}{ccccc}
    \mathcal{O}_{\bM\ot W} \ot   \mathfrak{g}  & \stackrel{d\tilde{\alpha}} {\longrightarrow} &   \mathcal{T}_{\bM\ot X}|_{\bM\ot W}  & \stackrel{d\tilde{\mu}}{\longrightarrow} & 
      \rho ^*\omega _C\ot \mathfrak{g}^*\\
      \\
     \begin{array}{c} g \\ \text{at } \sum m_{a^i}\ot w_{a^i} \end{array} &\mapsto & \sum \begin{array}{c} m_{a^i}    \\ \ot d\alpha (g)_{w_{a^i}}\end{array} & & \\
      \\
                                &    &                                     \sum  m_{a^i} \ot v_{a^i} & \mapsto &  \sum  \begin{array}{c}  m_{a^+}m_{a^-} \\ 
                                                                                                                 \ot d\mu _a (v_{a^+}, v_{a^-}) \end{array}
\end{array} \]       
concentrated at $[-1, 1]$.
                                          
Parallel to \eqref{Fsym},
\begin{equation}\label{Tsym} \mathbb{T}_\rho ^\vee \ot \rho ^* \omega _{C} \cong \mathbb{T}_\rho  \end{equation}
by the symplectic form and the isomorphisms $M_{a^+}\ot M_{a^-} \cong \omega _C$.
Therefore, \eqref{twisted-obstruction}  is  a symmetric
obstruction theory for $QG _{\bM, \beta}$ as in the proof of Proposition \ref{sym}.

\subsection{$T$-action}\label{T-action}

We verify Theorem \ref{twisted-symmetric} part (2) by the valuative criterion for properness, following \cite{CKM}.
First we note that by the assumption $(\affW)^T =$ isolated, $(\bM\ot W/\!\!/_{\! 0} G )^T$ is isomorphic to a projective scheme $(\affW )^T\times C$;
and by the assumption $W^s = W^{ss}$ with the free $G$-action, 
$[(\bM\ot W)^s/G]^T = [(\bM\ot W)^{ss}/G] ^T = (\bM\ot W /\!\!/G)^T$ which is a projective scheme over  $(\bM\ot W/\!\!/_{\! 0} G )^T$.

Let $(\Delta, 0)$ be a pointed smooth curve and consider a family of $T$-fixed stable quasimaps $(P, u): C\times (\Delta \setminus\{0\}) \ra [\bM\ot W/G]$.
Since $[(\bM\ot W)^s/G]^T$ is a projective scheme, the stable quasimap $(P, u)$ extends uniquely to
a stable quasimap $(\bar{P}, \bar{u}): C\times \Delta \ra  [\bM\ot W/G]$ possibly except finitely many points on the central fiber of $C\times \Delta$.
We may extend $(\bar{P}, \bar{u})$ to a quasimap defined everywhere on $C\times \Delta$ by extending the principal bundle $P$ and the section $u$.
The latter extensions are  possible due to \cite[Lemma 4.3.2]{CKM} and Hartogs' theorem, respectively. The uniqueness of these extensions
are clear.

\begin{Rmk}  If  the isomorphism \eqref{Tsym} is $T$-equivariant, then
the $T$-fixed part of $\mathbb{E}_\bM$ (see \cite[Proposition 1]{GP}) is 
a symmetric obstruction theory for $QG_{\bM, \beta} ^T$.
\end{Rmk}


\section{The Quiver Example}

\subsection{Nakajima's quiver varieties} 
The typical examples of $\XXX$, to which Theorem \ref{twisted-symmetric} can be applied,
are Nakajima's quiver varieties.
We set up quiver varieties, basically following \cite{Gin, Na}.

Let $Q$ be a finite quiver, which means that it is equipped with two
finite sets $Q_0$, $Q_1$ and two maps $t, h: Q_1\ra Q_0$.
We call $Q_0$ (resp. $Q_1$) the vertex (resp. arrow)  set of $Q$ and 
$ta$ (resp. $ha$) the tail (resp. head) of arrow $a\in Q_1$.
Let $\overline{Q}$ be the double quiver of  $Q$. 
It is defined as follows. 
The vertex set $\overline{Q} _0$ of $\overline{Q}$ is
exactly $Q_0$.  For each arrow $a$ in $Q_1$, 
create exactly two associated arrows $a^+$, $a^{-}$ of $\overline{Q}$, by
making the head (resp. tail) of $a^+$ (resp. $a^-$) = the head (resp. tail) of $a$.

Given a {\em dimension vector} $v=(v_i)\in \NN^{ Q_0}$, let 
\[ \mathrm{Rep}(\overline{Q}, v)
:=  \bigoplus _{a\in Q_1} \left( \Hom (\CC ^{v_{ta^+}}, \CC ^{v_{ha^+}})\oplus \Hom (\CC ^{v_{ta^-}}, \CC ^{v_{ha^-}})\right). \]
After $\mathrm{Rep}(\overline{Q}, v) $ being canonically identified with the total space of 
the cotangent bundle of  $$\bigoplus _{a\in Q_1} \Hom (\CC ^{v_{ta^+}}, \CC ^{v_{ha^+}}), $$
the space 
\begin{equation}\label{Xrep} X:=\mathrm{Rep}(\overline{Q}, v) \end{equation} can be regarded as a holomorphic symplectic manifold 
with the canonical holomorphic symplectic form $\omega$. The linear symplectic form is defined by
\[ \omega (A,B) = \sum _a \tr (A_{a^+}B_{a^-}) -  \tr (A_{a^-}B_{a^+}) \] for tangent vectors 
$A, B \in  T_{x} \mathrm{Rep}(\overline{Q}, v)=\mathrm{Rep}(\overline{Q}, v)$ at $x\in X$. 

Fix a subset $Q_0'$ of $Q_0$ and let $Q_0'':=Q_0\setminus Q_0'$.
Let $$G:=\prod _{i\in Q_0'} GL_{v_i}(\CC ).$$ 
There is a natural hamiltonian $G$-action on $X$ with a moment map
\begin{equation}\label{Wmu}  \mu (x) =  \bigoplus _{i\in Q_0'} (\sum _{a\in \overline{Q}_1: t\overline{a}=i} (-1)^{|a|}x_a x_{\overline{a}})   \text{ for } 
x=(x_a)_{a\in \overline{Q}_1} \in X \end{equation}
 where $\overline{a}$ denotes the opposite arrow of $a$
 and $|b^+|=0$, $|b^-|=1$ for $b\in Q_1$. Here we identified $\mathfrak{g}$ with its dual by trace.
 The equation (\ref{moment}) can be checked easily since for $g\in \mathfrak{g}$,
 $d\ka (g)$  is the linear vector field $(g_{ha}x_a - x_a g_{ta})_{a} \in \mathrm{Rep}(\overline{Q}, v)$.

Let $$\lambda = \sum _{i\in Q_0'} \lambda _i \mathrm{Id}_{\mathrm{End} _{v_i}(\CC )} \in
 \bigoplus _{i\in Q_0'} \mathrm{End} _{v_i}(\CC ), \ \lambda _i\in \CC$$ and
choose a  character $\chi = (\theta _i) \in\ZZ ^{Q_0'}$ of $G$ by sending $g\in G$ to 
$\prod _{i\in Q_0'} (\det g_i)^{\theta _i}\in\CC ^{\times}$. This defines a 
linearization $L=W \times \CC _{\chi}$ where $W:= \mu ^{-1}(\lambda )$.
Now we may consider the holomorphic symplectic quotient $\XXX:=W/\!\!/_{\chi}G$. We call the quotient a {\em quiver variety}.


\subsection{Twisted quasimap to a Nakajima quiver variety}
In this subsection let $\lambda =0$.
For each arrow $a\in Q_1$, fix two line bundles $M_{a^{\pm}}$ on $C$ with a fixed isomorphism
$M_{a^+}\ot M_{a^-}\ra \omega _C$. Below we identify $M_{a^+}\ot M_{a^-}\cong \omega _C$ 
and for $M_{a^-}\ot M_{a^+}\cong \omega _C$ we will use 
the natural isomorphism followed by the given one: $M_{a^-}\ot M_{a^+}\ra M_{a^+}\ot M_{a^-} \ra \omega _C$.

We rewrite the notion of $\bM$-twisted quasimaps to the quiver variety $\XXX$ in terms of twisted quiver bundles. 
\begin{Def} 
A pair $(P, u)$ is called a {\em $\bf M$-twisted quasimap} to $X/\!\!/\!\!/G$ with a rigid domain curve $C$ if:
\begin{itemize}
\item $P$ is a principal $G$-bundle on $C$.
\item  $u=(u_{a})_{a\in \overline{Q}_1}$ where 
\[ u_a \in \Gamma (C, P\times _G \Hom (\CC ^{v_{ta}}, \CC^{v_{ha}}  )\ot M_{a}). \]
\item The section $u$ satisfies the \lq\lq moment map" equation (for $\kl=0$): for all $i\in Q_0'$,
\[  \sum _{a\in \overline{Q}_1: t\overline{a}=i}(-1)^{|a|} (u_a\ot \mathrm{Id}_{M_{\overline{a}}})\circ u_{\overline{a}})
= 0.\] 
\end{itemize}

An $\bf M$-twisted quasimap is called {\em stable} 
if $u$ hits unstable locus only at finitely many points of $C$.
\end{Def}

This $\bM$-twisted quasimap is  
a generalization of a stable ADHM sheaf of Diaconescu \cite{Dia1} from the point of view of 
quasimaps \cite{CK, CKM}.

\subsection{Obstruction theory}

We note that $[u]^*\mathbb{T}_\rho$ becomes
\begin{eqnarray*}    0  \ra  \mathcal{P}\times _G \mathfrak{g}
&\ra & \begin{array}{r} \bigoplus _{a\in Q_1}\left(
 \mathcal{H}om (\cV_{ta^+}, \cV _{ha^+})\ot \pi _C^*M_{a^+}  \right. \\
\left. \oplus \mathcal{H}om (\cV _{ta^-}, \cV _{ha^-}) \ot \pi _C ^* M_{a^-} \right)\end{array} \\ 
  & \ra &  ( \mathcal{P}\times _G \mathfrak{g}^*)  \ot \pi _C^* \omega _{C} \ra 0, \nonumber \end{eqnarray*} 
 where $\cV _i = \mathcal{P}\times _G \CC ^{v_i}$. 
This is a generalization of the complex (4.3) in \cite[Definition 4.3]{Dia1}.


\section{Stabilities on Quiver bundles}\label{Stab}

\subsection{King's stability}
The stability with respect to  $\chi$ we used in the previous section can be rephrased
as a Rudakov's stability condition on a suitable abelian category
of representations of the path algebra $\CC\overline{Q}$ with relations (see \cite{Ru}, \cite[\S 2.3]{Gin}). 

The path algebra is a $\CC$-algebra spanned by, 
as a $\CC$-vector space, all
finite paths $a_n...a_1$ of consecutive arrows and an extra arrow $e_i$, for each $i\in Q_0$, where $a_l\in \overline{Q}_1$
and $ha_l=ta_{l+1}$ for all $l$. The product is given by a sort of compositions. Namely,
$(a_{j_m}...a_{j_1}) \cdot (a_{k_n} .... a_{k_1}) $ is $a_{j_n}...a_{j_1} a_{k_n} .... a_{k_1} $ if $ha_{k_n}=ta_{j_1}$,
$0$ otherwise.  The generators are subject to relations:
$e_i^2 = e_i$; $e_i a$ is $a$ if $ha=i$, $0$ otherwise; and $ae_i $ is $a$ if $ta=i$, $0$ otherwise.
 Impose one more relation coming from the moment map together with an element $\lambda \in \CC ^{Q_0'}$:
\[ \sum _{a\in \overline{Q}_1 : t\overline{a}\in Q_1'} (-1)^{|a|} a\overline{a} = \sum _{i\in Q_0'} \lambda _i e_i \] 
which will be denoted symbolically by $\mu - \lambda =0$.  

Denote by $(\mu -\kl )$ the two-sided ideal generated by $\mu - \kl$.
Note that a $\CC \overline{Q}/(\mu -\kl )$-module $V$ amounts to data $(V_i, \phi _a) _{i\in Q_0, a\in \overline{Q}_1}$ 
where $V_i$ is a $\CC$-vector space and
$\phi _a$ is a homomorphism from $V_{ta}$  to $V_{ha}$ subject to the condition coming from $\mu - \lambda =0$.
A homomorphism from  a module $(V_i, \phi _a) $ to another  $(V_i', \phi _a') $ is nothing but a collection $(\varphi _i)_{i\in Q_0}$
of linear maps $\varphi _i: V_i \ra V_i'$ 
making $\phi _a' \circ \varphi _{ta} = \varphi _{ha}\circ \phi _a $ for every $a\in \overline{Q}_1$.

Now, following \cite{King}, we are ready to reformulate the $\chi$-stability of $V=(V_i, \phi _a) \in \mathrm{Rep}(\overline{Q}, v)$ 
satisfying the moment map relation $\mu -\lambda =0$. 
Suppose $v_0:= \sum _{i\in Q_0''}  \dim  V_i\ne 0$.
Let $\theta  _0= -(\sum _{i\in Q_0'} \theta _i v_i)/ v_0$ and for a $\CC \overline{Q}/(\mu -\lambda)$-module $S$ write
\[\theta (S) = \theta _0 \dim S_0 + \sum _{i\in Q_0'}\theta _i \dim S_i, \]
where $S_0 := \oplus _{i\in Q_0''} \dim S_i$.
Note that $\theta (V)=0$.

\begin{Thm}\label{King}
The followings are equivalent.
\begin{enumerate} 
\item $V$ is semistable (resp. stable) with respect to $\chi=(\theta _i)$ as a point in $\mu ^{-1}(\lambda) \subset \mathrm{Rep}(\overline{Q}, v)$.
\item $\theta (S) \ge 0$  (resp. $\theta (S) > 0$) for every nonzero, proper, submodule $S$ of $V$ 
with $S_0=V_0$ or $S_0=0$.
\end{enumerate}
\end{Thm}
\begin{proof}
A slightly modified argument of \cite[\S 3]{King}, which uses Proposition \ref{NC}, works with group $G$, too. 
We provide its details.
For a 1-parameter subgroup $\kappa :\CC ^\times \ra G$, we may consider a linear action on the 
vector space $V = \oplus _{i\in Q_0} \CC ^{v_i} $ via the standard action of $G=\prod _{i\in Q_0'} GL_{v_i}(\CC )$.
If $V^{n}:= \{ p\in V: \kappa (t)\cdot p = t^m p, m\ge n,  \forall t\in \CC ^\times \}$, we obtain
a $\ZZ$-filtration $\cdots \supset V^{n-1} \supset V^{n} \supset \cdots$. 

For (2) $\Rightarrow$ (1), 
suppose $\lim _{t\ra 0} \kappa (t)\cdot (V_i, \phi _a)$ exists for a nontrivial $\kappa$. 
The existence means that for every $n$, $V^n$ is a submodule of $V$. Since $(V^n)_0=V_0$ or $0$, we see 
that $\theta (V^n)\ge 0$.
Since the $\kappa$-action is nontrivial, 
some $V^n$ is a proper, nonzero submodule of $V$. Now the proof follows from the identity
$\langle \chi , \kappa \rangle = 
\sum _{n\in \ZZ} n \theta (V^n/V^{n+1}) = \sum_{n\in \ZZ}  \theta (V^n)$.

For (1) $\Rightarrow$ (2), let $S$ be a nonzero, proper submodule of $V$.
If $S_0= V_0$, then there is
a nontrivial $\kappa$ such that $V^{-1}=V$, $V^{0} = S$ and $V^1=0$.
If $S_0=\{ 0\}$, then  there is
a nontrivial $\kappa$ such that $V^{0} = V$, $V^1=S$ and $V^{2}=\{ 0\}$.
In either case, $\lim _{t\ra 0} \kappa (t)\cdot (V_i, \phi _a)$ exists 
and $\langle \chi , \kappa \rangle = 
\sum _{n\in \ZZ} n \theta (V^n/V^{n+1}) = \sum _{n\in\ZZ} \theta (V^n) = \theta (S)$. Therefore, we conclude the proof by Proposition \ref{NC}. 
\end{proof}

\medskip

This motivates the following. First, {\em from now on we assume that
$Q_0''$ has exactly one vertex $0$. } Fix  a finite dimensional vector space $K$.

\medskip

\begin{Def} 
Denote by  $\mathrm{Rep}_{\lambda}(\overline{Q}, K)$ the category  whose objects
are finite dimensional $\CC \overline{Q}/(\mu -\kl )$-modules $V$ with an identification
$V_0=\bar{V_0}\ot _{\CC} K$ for some vector space $\bar{V_0}$
and whose morphisms, say from $V$ to $V'$ 
are module homomorphisms $(\varphi _i)_{i\in Q_0}$
satisfying the condition:
 $\varphi _0 = \bar{\varphi _0}\ot \mathrm{id}_K : V_0=\bar{V_0}\ot K \ra \bar{V_0}'\ot K = V_0'$ where 
 $\bar{\varphi _0}$ is a $\CC$-linear map from $\bar{V_0}$ to $\bar{V_0}'$.
 It is clear that 
$\mathrm{Rep}_{\lambda}(\overline{Q}, K)$  is an abelian category.
\end{Def}

Define a homomorphism 
\[ Z :\ZZ ^2\ra \CC,  \ \ 
                                                                        (x, y)  \mapsto y +\sqrt{-1} x. \]
For $V\in \mathrm{Rep}_{\lambda} (\overline{Q}, K)$,
let  $Z(V)=Z(v_0,v_1) $ where $v_0=\dim V_0$, $v_1=\dim V_1$ and $V_1:= \oplus _{i\in Q_0'} V_i$.
Note that for nonzero $V$, $Z(V)\neq 0$  and $0 \le \mathrm{Arg}(Z(V)) \le \pi /2$. 

For a $V\in\mathrm{Rep}_\kl (\overline{Q}, K)$ with $v_0\ne 0$, if
we take  $$\theta _0:= - v_1/v_0,\  \theta _i := 1, \forall i\in Q_0'$$ 
then
$\mathrm{Arg}Z (S)  \le \mathrm{Arg} Z (V) $ if and only if $\theta (S) \ge 0$
(resp. $\mathrm{Arg}Z (S)  <  \mathrm{Arg} Z (V) $  if and only if $\theta (S) > 0$)
for every nonzero proper subobject $S$ of $V$ in $\mathrm{Rep}_\kl (\overline{Q}, K)$. 
Therefore, by Theorem \ref{King}, Rudakov's stability (\cite{Ru})
defined via the stability function $Z$ coincides with King's $\theta$-stability.


\subsection{Quiver sheaves} 
In this subsection, for every $i\in Q_0'$ fix $\lambda _i \in \Gamma (C, \omega _C)$.                                                                    
For a systematic study of stabilities on quasimaps we interpret quasimaps as linear objects. 
Fix  a finite dimensional vector space $K$ as before.

\begin{Def} A 
data $(E_i, \phi _a)_{i\in Q_1, a\in \overline{Q}_1}$ 
is called {\em $\bf M$-twisted quiver (coherent) sheaf on $C$} with respect to $(\overline{Q}, \lambda)$ 
 if $E_i$ is a (coherent) sheaf of $\cO _C$-modules;  $E_0$ is 
$\bar{V_0}\ot _{\CC}(K\ot _{\CC} \mathcal{O}_C)$ for some finite dimensional vector space $\bar{V_0}$; and $\phi _a$ is
a $\mathcal{O}_C$-module homomorphism from $E_{ta} $ to $E_{ha}\ot M_a$. 
The homomorphisms are subject to 
relation (which will be denoted also by $\mu  -\lambda =0$):
\begin{equation}\label{relations} \sum _{a\in \overline{Q}_1 : t\overline{a}=i} (-1)^{|a|} 
(\phi _a\otimes \mathrm{Id}_{M_{\overline{a}}})\circ \phi _{\overline{a}} -\mathrm{Id}_{E_i}\ot \lambda _i= 0, \ \ \forall i\in Q_0'\end{equation}
 unless stated otherwise.
\end{Def}

\begin{Rmk} 
For the history of the studies of (the moduli spaces of) twisted quiver sheaves usually without the relation, 
see \cite{A, AG, CH, GK, Sh} and references therein. See also \cite{Sz1}.
\end{Rmk}

Like a quiver representation and a $\mathcal{O}_C$-sheaf, a quiver sheaf can be considered as
a module of the $\bf M$-twisted path algebra ${\bf M}\overline{Q}$ over $\mathcal{O}_C$ (\cite{GK, AG}). 
For each path $p=a_m...a_1$, let
 $M^{\vee}_p=M_{a_m}^\vee \ot _{\mathcal{O}_C} M_{a_{m-1}}^\vee \ot _{\mathcal{O}_C} ...\ot _{\mathcal{O}_C}M_{a_1}^\vee$
 and for $e_i$, let $M_{e_i}^{\vee} = \mathcal{O}_C$.
 Let $${\bf M}\overline{Q}/(\mu -\lambda )=(\bigoplus _{\text{all paths } p} M^{\vee}_p) /(\mu -\lambda )$$
 which has a $\mathcal{O}_C$-algebra structure similar to the path algebra $\CC \overline{Q}$. Here $(\mu -\lambda )$ is
 the two-sided ideal generated by the relations (\ref{relations}) for \lq\lq abstract" $\phi _a$: For every local 
 section $\xi\in \omega _C ^\vee$, consider
 $$(\mu -\lambda )_i(\xi ) :=\sum _{ha=i} (-1)^{|a|} \xi _a \ot \xi _{\bar{a}} - \langle \xi, \lambda _i \rangle e_i$$ where
 $\xi _a\ot \xi _{\bar{a}}$ is an element in $M_a^\vee\ot M_{\bar{a}}^\vee$ corresponding to $\xi$ 
 under the given isomorphism $M_a^\vee\ot M_{\bar{a}}^\vee\cong \omega _C^\vee$.
 The ideal sheaf $\mu - \lambda$ is defined to be the ideal sheaf generated by $(\mu -\lambda )_i(\xi )$ for all  $i\in Q_0'$, $\xi\in \omega _C^\vee$.
 
 Given a ${\bf M}\overline{Q}/(\mu -\lambda )$-module structure on $E$, a $\bf M$-twisted quiver sheaf can be
 associated by letting $E_i=M_{e_i}^{\vee}E$ and $(\phi _a )_{|_U} (m_a\ot s )= m_a s $ for an open set $U\subset C$, 
 $m_a\in M_a^\vee(U)$, $s\in E_{ta}(U)$. Here we regard $\phi _a$ as a homomorphism from  $M_{a}^\vee\ot E_{ta}$ to  $E_{ha}$.
 Conversely, a quiver sheaf defines  a module structure on $\oplus E_i$. For full details, see \cite[Lemma 2.1, \S 4]{CH}, \cite[Proposition 2.3]{KL}, 
 \cite[Proposition 5.1]{AG}.

\medskip

Denote by $\Rep$ the abelian category of $\bf M$-twisted quiver coherent sheaves $E$ with framing
$E_0=\bar{V_0}\ot _\CC  (K\ot _\CC \mathcal{O}_C)$ for some finite dimensional vector space $\bar{V_0}$.
A morphism from $(E_i, \phi _a)$ to $(E_i', \phi _a')$ is, by definition, a collection $(\varphi _i)_{i\in Q_0}$
of $\mathcal{O}_C$-homomorphism $\varphi _i: E_i \ra E_i'$ 
making $\phi _a' \circ \varphi _{ta} = (\varphi _{ha}\ot 1_{M_a})\circ \phi _a $ for every $a\in \overline{Q}_1$
and satisfying $\varphi _0 = \bar{\varphi _0} \ot \mathrm{id}_{K\ot _\CC \mathcal{O}_C}$ for some $\CC$-linear map $\bar{\varphi _0}$.
Denoted by $$\Repkl$$ to be the full subcategory of $\Rep$ whose objects
satisfy the moment map relation \eqref{relations}.

For $\delta >0$, define a homomorphism 
$Z_{\delta} :  \ZZ ^3 \ra \CC$
 by assignments
\[ \begin{array}{rcl} Z(v_0, v_1, d) & = & Z _{(1)} (v_1,d) + Z_{(2)} (v_0, v_1), \\ 
Z _{(1)}(v_1,d) &=&  \frac{v_1}{2} +\sqrt{-1}d, \\ 
Z _{(2)} (v_0, v_1) &=& \frac{v_1}{2}+   \sqrt{-1}\delta v_0. \end{array}\] 
 Also define $Z_{\delta}(E) \in\CC$ by the rank-degree map
$\Rep \ra  \ZZ^3$, $E\mapsto (\rank E_0, \rank E_1, \deg E_1)$ followed by $Z$,
where $E_1:=\oplus _{i\in Q_0'} E_i$.

We remark that the homomorphism (followed by $\pi/2$-rotation) 
is a stability function on $\ZZ ^3$
with Harder-Narasimhan property in the sense of Bridgeland (see \cite[Proposition 2.4]{Br}). 
In more elementary way, this gives rise to a Rudakov's stability \cite{Ru} in an abelian category.

Let $\mu _{\delta} (E)\in (-\infty , \infty ]$ be the slope of $Z_\delta (E)$ for a nonzero quiver sheaf $E$. 
We abuse notation by letting $\mu$  stand for slopes as well as moment maps.
This shouldn't cause any confusion.

Following the $\delta$-stability introduced in \cite[Definition 2.1]{Dia} for ADHM case, we come to this. 
\begin{Def}
An  $\bf M$-twisted quiver coherent sheaf $$E \in \Repkl, \text{ with } \rank E_0\neq 0$$
is called {\em $\delta$-semistable} (resp. {\em $\delta$-stable}) if $\mu _{\delta} (E')\le \mu _{\delta} (E)$
(resp. $\mu _{\delta} (E') < \mu _{\delta} (E)$)
for every nonzero proper subobject $E'$ of $E$ in $\Repkl$.
\end{Def}
It is necessary that a $\delta$-semistable quiver sheaf $E$ is locally free
(i.e., a quiver sheaf with $E_i$ 
being locally free sheaf for every $i$). We call a locally free quiver sheaf a {\em quiver bundle}.

\bigskip

{\em From now on assume that
$\kl =0$ and by a $\bM$-twisted quiver sheaf $E$ we mean $E\in \Repklo$ so that 
$E$ satisfies the relation \eqref{relations}.}

\bigskip
 If $X:=\mathrm{Rep} (\overline{Q}, v)$ as in \eqref{Xrep},
a $\bf M$-twisted quasimap to $X/\!\!/\!\!/G$ with degree $\beta$
amounts to a $\bf M$-twisted quiver bundle  with 
\[ \mathrm{rank} E_i = v_i,\  \deg E_i = \beta (\mathrm{det}_i),\]  where
$\det _i$ is the character of $G$ given by the determinant of $i$-th general linear group. 
More precisely speaking, if $W:=\mu ^{-1}(0)$ (see \eqref{Wmu}), there is a natural 1-morphism from $\Gamma (C,  [\bM\ot W/G] )$ to
 the moduli stack of $\bf M$-twisted quiver bundles on $C$ with $E_i=P\times _G \CC ^{v_i}$.
 
 Let $\mathfrak{M}_{\delta, {\bf M} , \{ v_i \} }$ be the stack of $\delta$-stable objects in $\Repklo $
 with ranks $v_i$
 and let $QG_{\delta, \bf M}$ be its inverse image under the above natural 1-morphism.

 
\begin{Thm}\label{bd} 
The moduli space $QG _{\delta, \bM, \beta}$ 
of $\delta$-stable $\bf M$-twisted quiver bundles of degree $\beta$ on $C$ 
is an algebraic space of finite type over $\CC$, equipped with
 a natural symmetric obstruction theory.
\end{Thm}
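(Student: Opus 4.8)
The plan is to prove Theorem \ref{bd} in three stages: first, identify the moduli problem of $\delta$-stable $\mathbf{M}$-twisted quiver bundles with the moduli problem of stable $\mathbf{M}$-twisted quasimaps treated in Proposition \ref{proper} and Proposition \ref{twisted-symmetric}; second, deduce the obstruction theory for free from that identification; third, address finiteness of type, i.e. boundedness, which is the only genuinely new input (as already flagged in the proof of Proposition \ref{proper}). Throughout we use $\lambda=0$ and $\chi=(\theta_i)_{i\in Q_0'}=(1,\dots,1)$, so the GIT quotient is $\XXX$ as fixed at the end of \S\ref{Stab}, and we use the dictionary established just before the statement: up to isomorphism a $\delta$-stable $\mathbf{M}$-twisted quiver bundle $E$ with $\rank E_i = v_i$ and $\deg E_i = \beta(\det_i)$ is the same datum as an $\mathbf{M}$-twisted quasimap $(P,u)$ to $\XXX$ of degree $\beta$ with rigid domain $C$, once one checks that $\delta$-stability of $E$ is equivalent to the quasimap stability condition \lq\lq $u$ hits the unstable locus only at finitely many points of $C$''. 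The latter equivalence is essentially the content of Proposition \ref{asym} (asymptotic/slope stability $\Leftrightarrow$ quasimap stability); for the purposes of this theorem I would invoke Theorem \ref{King} pointwise on $C$ together with the saturation/degree estimates in Definition \ref{lowbd} and Lemma \ref{bbd} to see that instability of $E$ as a quiver sheaf at a generic point forces a destabilizing framed subobject violating the $\mu_\delta$-inequality for $\delta$ sufficiently large, while a finite-length \lq\lq torsion'' of $u$ at points of $C$ does not affect $\mu_\delta$ asymptotically. Granting this, $Qmap_\delta(\XXX,\mathbf{M},\beta,C)$ and $Qmap(\XXX,\mathbf{M},\beta,C)$ are the same DM stack.

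Given the identification, the symmetric obstruction theory $(R^\bullet\pi_*\,\mathcal{E}\otimes\mathbf{M})^\vee\to\mathbb{L}_{Qmap}$ is precisely the one produced in Proposition \ref{twisted-symmetric} (which itself only assumed (1) of Condition \ref{Cond}, with (2) and (3) made unnecessary by Remark \ref{two-term}); there is nothing further to prove for that clause. So the theorem reduces to: the stack is a DM stack of finite type over $\CC$. DM-ness follows from the absence of nontrivial infinitesimal automorphisms of stable $\mathbf{M}$-twisted quiver bundles — this uses framing (the automorphism must be the identity on $E_0$) together with stability forcing $\Hom(E,E)$ to be one-dimensional on the stable locus, exactly the $\lambda=0$, framed analogue of the standard quiver-representation argument; alternatively it is inherited from the quasimap side. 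That the relevant Artin stack is a locally closed substack of a suitable moduli stack of $\mathcal{O}_C$-coherent-sheaf data with morphisms (a product of $\mathrm{Quot}$/$\mathrm{Hilb}$-type constructions over $\BunG$) is routine, so finite type is equivalent to boundedness of the family.

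The main obstacle is therefore the boundedness statement, and this is where Lemma \ref{bbd} does the work. The plan is as follows. Fix a covering $\phi:C\to\PP^1$ of degree $b$. For a $\delta$-stable $E$ with the fixed numerical invariants, $\delta$-stability bounds the slopes $\mu_\delta(E')$ of all subobjects by $\mu_\delta(E)$; specializing to subobjects supported at a single vertex $i\in Q_0'$ (which are automatically quiver subsheaves once one takes the quiver-subsheaf they generate) gives, via the explicit form of $Z_\delta$, a uniform upper bound on $\deg E_i$ and hence, since the total degree $\deg E_1$ is fixed, a uniform \emph{lower} bound on each $\deg E_i$; combined with $\rank E_i=v_i$ fixed this pins down all the $E_i$ numerically. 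Next, stability plus the framing $E_0=K\otimes\mathcal{O}_C$ and the saturation estimate of Definition \ref{lowbd} bound the degrees of the saturated subsheaves $\langle E_0\rangle_i$ from below by $0$; then an application of part (2) of Lemma \ref{bbd} (with $E$ there taken to be $E_i$ and $L$ a suitable twist of $\omega_C$ built from the $M_a$) bounds the degrees of \emph{all} subsheaves $F\subset E_i$ above, i.e. shows each $E_i$ has bounded Harder--Narasimhan slopes; part (1) of Lemma \ref{bbd} converts a vanishing $H^1(C,E_i\otimes\phi^*\mathcal{O}(m_0))$ obtained from the slope bound into the vanishing $H^1(C,E_i^\vee\otimes\phi^*\mathcal{O}(m))=0$ for all $m\geq n_0$ with $n_0$ uniform, which is exactly what is needed to embed the whole family of $E_i$'s (and, with a further twist by the $M_a$, the maps $\phi_a$) into a fixed Quot scheme. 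Hence the family is bounded. I expect the bookkeeping with the line bundles $M_{a^\pm}$ (and the identification $M_{a^+}\otimes M_{a^-}\cong\omega_C$) in applying Lemma \ref{bbd} to the twisted maps $\phi_a$, together with making the vertex-by-vertex slope bounds genuinely uniform in $\delta$ large, to be the fiddly part; everything else is formal once boundedness is in hand.
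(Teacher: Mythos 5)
There is a genuine gap, and it appears twice in the same form: circular reliance on results that come \emph{after} Theorem \ref{bd} in the paper's logical order. Your stage one identifies $\delta$-stable $\mathbf{M}$-twisted quiver bundles with quasimap-stable objects, invoking (essentially) Proposition \ref{asym}; but that equivalence holds only for $\delta\ge\delta_0$ asymptotically, whereas Theorem \ref{bd} is asserted for an arbitrary fixed $\delta>0$ (this is the point of the theorem, e.g.\ for wall-crossing), and moreover the proof of Proposition \ref{asym} itself begins ``by the boundedness in the previous Theorem, there is a number $m_0$\dots'', i.e.\ it consumes the boundedness you are trying to establish. The same problem infects your stage three: Lemma \ref{bbd} is not an engine for proving boundedness, because its hypothesis --- a vanishing $H^1(C,E\ot\phi^*\cO(m_0))=0$ with $m_0$ uniform over the family --- is exactly the kind of statement that boundedness provides; in the paper it is extracted from Theorem \ref{bd} and used only in Proposition \ref{asym}. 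You also never close the essential gap that $\delta$-stability controls only subobjects in $\mathrm{Rep}_{\kl,C}(\overline{Q},\mathbf{M},K)$, i.e.\ subsheaves preserved by the twisted maps $\phi_a$ and compatible with the framing; an arbitrary subsheaf $F\subset E_i$, or a Harder--Narasimhan destabilizing subsheaf of $\oplus_i E_i$, is not such a subobject, and ``taking the quiver subsheaf it generates'' changes the object in a way whose slope you have not controlled. (Also note the degrees $\deg E_i=\beta(\det_i)$ are already fixed; what must be bounded is $\mu_{max}$ of the underlying sheaves across the family, not the numerical invariants.)

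For comparison, the paper's proof of the finite-type statement bounds $\mu_{max}$ directly: take the Harder--Narasimhan filtration $0=E^0\subset\cdots\subset E^l=\oplus_i E_i$ for the standard slope $\mu_{st}$, observe each $E^i$ splits along vertices, and run a descending induction on $i$ with the dichotomy: either all $\phi^a$ map $E^i$ into $E^i\ot M_a$, so $E^i$ is a quiver subobject and $\delta$-semistability caps its slope; or some $\phi^a$ induces a nonzero map between HN subquotients, which forces $\mu_{st}(E^{i'}/E^{i'-1})\le\mu_{st}(E^{i''}/E^{i''-1}\ot M_a)$ and costs at most $\deg M_a$ per step. This yields $\mu_{st}$ of every HN quotient bounded by $\mathrm{max}\{0,\mu_\delta(E)\}$ plus a multiple of $\mathrm{max}\{0,\deg M_a\}$, hence a uniform bound on slopes of all subsheaves, and boundedness follows from Theorem 1.1 of \cite{Sim}; the stack construction is then parallel to \cite{CKM}, and the symmetric obstruction theory is obtained directly for $Qmap_\delta$ as in Proposition \ref{twisted-symmetric} (using Remark \ref{two-term} and the absence of nontrivial infinitesimal automorphisms), with no detour through the quasimap identification. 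Your proposal is missing precisely this HN-filtration argument, which is the actual content of the theorem; the obstruction-theory clause of your proposal is fine once it is stated for $Qmap_\delta$ itself rather than deduced from an identification valid only for large $\delta$.
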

\begin{proof}
There are no nontrivial automorphisms of stable objects except overall multiplications.
The nontrivial overall multiplications of stable objects are not allowed as objects in
$QG _{\delta, \bM}$ because the framing with the trivial $G$-action and the $\delta$-stability 
remove such automorphims in \eqref{auto}. 
Thus $QG_{\delta, \bf M} \times B\CC ^* \cong  \mathfrak{M}_{\delta, \bM , \{ v_i \}}$ as stacks.
The section \S \ref{SymmetricObs} shows the complex corresponding to \eqref{twisted-obstruction}  is a
symmetric obstruction theory for
$QG _{\delta, \bM, \beta}$ since the $\delta$-stability  is an open condition as usual.

We prove the boundedness 
using Harder-Narasimhan filtration with respect to the standard slope $\mu _{st}$.
Let $(E,\phi ^a)$ be {\em $\mu _{\delta}$-semistable} quiver sheaf.
Considering $E$ as a $\mathcal{O}_C$-module $\oplus _i E_i$ on $C$, take the Harder-Narasimhan (HN) filtration
\[ 0=E^0\subset E^1 \subset ...\subset E^l =E \]
of $E$ for $\mu _{st}$ in the category of $\mathcal{O}_C$-modules.
Since the HN filtration of a direct sum is a certain sum of each HN filtration, 
we see $E^i=\oplus _{j\in Q_0} E^i_j$ for  $E^i_j:=E^i\cap E_j$.
Also, note that $E_0 = E^i_0/E^{i-1}_0$ for some $i$.

\medskip

Claim: The following inequality holds: \begin{align*} \mu _{st} (E^i/E^{i-1}) & \le N_i(\mu _\delta (E),l, \deg {\bf M}) \\
 & := \mathrm{max}\{ 0, \mu _\delta (E) \} + (l-i) \mathrm{max}\{ 0 , \deg M_a \  | \ a \in \overline{Q}_1\}   . \end{align*}

\noindent{\em Proof of Claim.}
We prove Claim by induction on $l-i$.
When $i=l$, the claim is true since $\mu _{st}(E^l/E^{l-1}) \le \mu _{st} (E^l)$.
We define a composite $$\psi ^a _i : E^i \ra E^i_{ta} \ra E_{ha}\ot M_a \ra E_{ha}\ot M_a / E_{ha}^i\ot M_a \ra E\ot M_a/ E^i\ot M_a , $$ where
the first map is the canonical epimorphism and the second map is the restriction of $\phi ^a$ to $E^i_{ta}$,
 the third map is the projection, and the last map is the canonical monomorphism.
Define $$\psi _i = \oplus _{a\in \overline{Q}_1} \psi _i^a: \oplus _a E^i \ra \oplus _a (E \ot M_a /E^i \ot M_a) , $$
where $\oplus _a E^i$ is the sum of $\overline{Q}_1$ - many copies of $E^i$.

If $\psi _i =0$, then $E^i$ is a subobject of $E$ in  $\Repklo$ 
since $E^j/E^{j-1}=E_0$ for some $j$. Hence,
$$\mu _{st}' (E^i ) := \frac{\deg E^i}{\rank E^i_1} \le \mu _{\delta} (E^i ) \le \mu _{\delta} (E)$$ which, combined with
 $$\mu _{st}(E^i/E^{i-1}) \le
\mu _{st}(E^i )=\mu _{st}'(E^i ) - \frac{\rank E^i_0\deg E^i}{\rank E^i\rank E^i_1} , $$ implies 
\begin{equation*}
 \mu _{st}(E^i /E^{i-1}) \le \begin{cases}  0 & \text{ if } 
 \deg E^i \le 0 \\
    \mu _\delta (E) & \text{ if } \deg E^i > 0 . \end{cases} \end{equation*}
Thus, $\mu _{st}(E^i/E^{i-1}) \le \mathrm{max}\{ 0, \mu _\delta (E) \} $.

If $\psi _i \ne 0$, then $\mu _{st} (E^{i'}/E^{i'-1}) \le \mu _{st} (E^{i''}/E^{i''-1} \ot M_a) $ for some $a$ and 
$i'\le i\le i''-1$. By induction hypothesis, 
$$\mu _{st} (E^{i'}/E^{i'-1}) \le N_i(\mu _\delta (E),l, \deg {\bf M}) .$$     $\Box$ 
\medskip

Now we are ready to prove the boundedness. 
Let $F$ be a nonzero subsheaf of $E$. If $$0=F^0\subset F^1 \subset  ... \subset F^m=F$$ is the
HN filtration of $F$ for $\mu _{st}$, for every $1\le k\le m$ the natural map $$F^k/F^{k-1} \ra E^i/E^{i-1}$$ is nonzero for some $i$.
Hence, $$\mu _{st} (F^k/F^{k-1}) \le \mu _{st} (E^i/E^{i-1})$$ for some $i$, which implies, combined with Claim, that
$$\mu _{st}(F)\le N_1(\mu _\delta (E), \rank (E), \deg {\bf M}).$$
Now the boundedness follows from \cite[Theorem 1.1]{Sim}.
\end{proof}

\begin{Rmk}
Note that the above proof of the boundedness holds also for $\delta$-{\em semistable} $\bf M$-twisted quiver bundles of degree $\beta$
on $C$.
\end{Rmk}

The following Lemma will be used in the proof of 
Proposition \ref{asym} below.

\begin{Lemma}\label{bbd}  Fix the curve $C$ and a branched covering map $\phi : C\ra \PP ^1$. 
Let $E$ be a vector bundle on $C$. 
\begin{enumerate}
\item If $H^1(C, E\ot \phi ^*\mathcal{O}(m_0))=0$ for some $m_0\ge 0$, then there is a number $n_0$ depending only on
$\deg E$, $\rank E$, and $m_0$ satisfying $H^1(C, E^\vee\ot \phi ^*\mathcal{O}(m))=0$ for all $m\ge n_0$.

\item If $H^1(C, E^\vee \ot L)=0$ for a line  bundle $L$ on $C$, then
$\deg F\le (|\deg L|   + |1-g|)\rank E$ whenever $F$ is a subsheaf of $E$. 
\end{enumerate}
\end{Lemma}

\begin{proof} Let $b$ be the degree of the covering map and let $r=\rank (E)$.

For (1): Since $0=H^1(C, E\ot \phi ^*\mathcal{O}(m_0))=H^1(\PP ^1, (\phi _*E) (m_0))$, we have $\phi _*E=\oplus \cO (a_i^E)$ with
$a_i^E + m_0 \ge -1$. Hence $$\sum _i a_i ^E \ge a_j^E - (m_0+1)(br-1), \text{ for any } j. $$
On the other hand, by Riemann-Roch theorem, $$\deg \phi _*E + br = \deg E + r(1-g) .$$ Therefore, 
$a_j^E \le r(1-g) +m_0br + \deg E$. If we set  $$n_0 =| r(1-g) +m_0br + \deg E| +l+1$$ for some positive $l$ with a nonzero homomorphism
$\omega _C \ra \phi ^*\mathcal{O}(l)$, the proof follows by the Serre duality. 

For (2): Note that $$H^0(C, F\ot \omega _C \ot L^\vee) \subset 
H^0(C, E\ot \omega _C \ot L^\vee) = H^1(C, E^\vee \ot L)^\vee =0 ,$$
which implies that $$0\ge \chi (C, F\ot \omega _C \ot L^\vee ) = \deg F + \rank F (2g-2-\deg L)+\rank F (1-g) . $$
Hence, we conclude the proof.
\end{proof}

\begin{Def}\label{lowbd} Denote by $\langle E_0\rangle$ the smallest quiver {\em saturated} 
subsheaf of $E$ containing $E_0$ (which can be obtained by the intersection of all 
submodules $F$ satisfying $E_0\subset F$ and $E/F$ is torsion free).
Here the saturation means that the sheaf $E_i/\langle E\rangle _i$ is torsion free for every $i$.
\end{Def}

\begin{Prop}\label{asym} Fix $v=(v_i) \in \NN ^{Q_0}_{\ge 0}$ with $v_0=\dim K$ and let $d=(d_i)\in \ZZ ^{Q_0'}$. 
There is  a number $\delta_0 >0 $ such that for all $\delta \ge \delta _0$, 
the following conditions are equivalent for $\bf M$-twisted quiver bundles $E$ 
with numerical data $(v, d)$ in the category $\Repklo$.
\begin{enumerate}
\item $\delta$-semistability.
\item $\delta$-stability.
\item the stability as a $\bf M$-twisted quasimap to $X/\!\!/\!\!/_{\theta } G$ where $\theta =(1,...,1)$.
\item $\langle E_0\rangle = E$.
\end{enumerate}
\end{Prop}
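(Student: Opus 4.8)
The plan is to establish the cycle $(4)\Rightarrow(2)\Rightarrow(1)\Rightarrow(4)$ together with $(3)\Leftrightarrow(4)$; here $(2)\Rightarrow(1)$ is trivial. Two observations will be used throughout. Since $\dim_{\CC}K=v_0=\rank E_0$ and the framing part of any morphism is a block matrix of constant multiplications, every subobject $E'\subseteq E$ in $\mathrm{Rep}_{\kl , C}(\overline{Q},{\bf M},K)$ satisfies $\rank E'_0\in\{0,v_0\}$, with $E'_0=E_0$ in the latter case. And, writing $v_1:=\sum_{i\in Q_0'}v_i$ and $d_1:=\sum_{i\in Q_0'}d_i$, which depend only on $(v,d)$, the stability function $Z_\delta$ gives $\mu_\delta(E')=(\deg E'_1+\delta\,\rank E'_0)/\rank E'_1$ for a nonzero subobject $E'$ (with the convention $\mu_\delta(E')=+\infty$ if $\rank E'_1=0$); in particular $\mu_\delta(E)=(d_1+\delta v_0)/v_1$, which tends to $+\infty$ with $\delta$, independently of $E$.

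For $(3)\Leftrightarrow(4)$ I would argue fibrewise. Trivializing the $M_a$ near a point $z\in C$ identifies the value of the quasimap there with a representation $V$ of $\overline{Q}$ of dimension vector $v$, framed at the vertex $0$ by $\CC^{v_0}$. Applying Theorem \ref{King} with $\theta_i=1$ for $i\in Q_0'$ and $\theta_0=-v_1/v_0$, a short computation of $\theta(W)$ on framed submodules $W$ (one gets $\theta(W)=\dim W_1-v_1\le0$ when $W_0=V_0$, with equality only for $W=V$, and $\theta(W)=\dim W_1\ge0$ when $W_0=0$) shows that $V$ is $\theta$-semistable iff it is $\theta$-stable iff it is generated, as a representation of $\overline{Q}$, by its framing subspace; in particular there are no strictly semistable points, and the unstable locus is $\{V:\langle V_0\rangle\neq V\}$. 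Consequently the quasimap meets the unstable locus at only finitely many points of $C$ precisely when the (unsaturated) quiver subsheaf $\langle E_0\rangle^{\mathrm{ns}}$ generated by $E_0$ has full generic rank at every vertex, which by generic flatness is equivalent to its saturation $\langle E_0\rangle$ being all of $E$.

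Next, $(1)\Rightarrow(4)$: if $\langle E_0\rangle\neq E$, then $\langle E_0\rangle$ is a proper nonzero subobject with $\langle E_0\rangle_0=E_0$, $r:=\rank\langle E_0\rangle_1\le v_1-1$, and $\deg\langle E_0\rangle_1\ge0$ by Definition \ref{lowbd}; hence $\mu_\delta(\langle E_0\rangle)\ge\delta v_0/(v_1-1)$, which exceeds $\mu_\delta(E)=(d_1+\delta v_0)/v_1$ as soon as $\delta>\delta_0':=\max\{0,(v_1-1)d_1/v_0\}$ (and equals $+\infty$ when $r=0$), contradicting $\delta$-semistability. For $(4)\Rightarrow(2)$, assume $\langle E_0\rangle=E$ and take a proper nonzero subobject $E'$. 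If $E'_0=E_0$, then $E'\supseteq\langle E_0\rangle^{\mathrm{ns}}$, which is of full rank at every vertex, so $\rank E'_i=v_i$ for all $i$; properness forces $E'_j\subsetneq E_j$ for some $j\in Q_0'$, whence $\deg E'_1\le d_1-1$ and $\mu_\delta(E')<\mu_\delta(E)$ for every $\delta>0$. If $E'_0=0$, then $\rank E'_1\ge1$ (as $E$ is a bundle) and $\mu_\delta(E')=\deg E'_1/\rank E'_1\le\mu_{\max}(E_1)$, where $\mu_{\max}$ denotes the maximal slope for $\mu_{st}$; granting a bound $\mu_{\max}(E_1)\le B$ with $B$ depending only on $v,d,C,{\bf M}$ (not on $E$), there is $\delta_0''$, depending only on these data, such that $\mu_\delta(E')<\mu_\delta(E)$ for all $\delta\ge\delta_0''$. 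Taking $\delta_0:=\max\{\delta_0',\delta_0''\}$ then gives the equivalence.

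The step I expect to be the main obstacle is the uniform bound on $\mu_{\max}(E_1)$ for bundles $E$ with data $(v,d)$ satisfying $\langle E_0\rangle=E$; this is where Lemma \ref{bbd} comes in. First I would show that $\langle E_0\rangle=E$ forces generation in bounded path-length: if $E^{(k)}_i\subseteq E_i$ is the subsheaf generated by the images of all paths $0\to i$ of length $\le k$, the generic ranks are nondecreasing in $k$ and bounded by $v_i$, and once they agree at every vertex for two consecutive values of $k$ they remain constant (the fibrewise subrepresentations being then determined by the quiver maps), so they stabilize by $k=N:=\sum_i v_i$; since $\langle E_0\rangle=E$ the stable value is $v_i$ at each vertex, so $E^{(N)}_i$ has rank $v_i$ and $E_i/E^{(N)}_i$ is torsion. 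Now $E^{(N)}_i$ is a quotient of a \emph{fixed} vector bundle $\cM_i$ on $C$ — a sum of copies of the finitely many line bundles that are tensor products of at most $N$ of the $M_a^\vee$ — so, choosing $m_0$ with $H^1(C,\cM_i\otimes\phi^*\cO(m_0))=0$, we obtain $H^1(C,E^{(N)}_i\otimes\phi^*\cO(m_0))=0$, hence $H^1(C,E_i\otimes\phi^*\cO(m_0))=0$ since $E_i/E^{(N)}_i$ is torsion. Because $\deg E_i=d_i$ and $\rank E_i=v_i$ are fixed, Lemma \ref{bbd}(1) produces a fixed $n_0$ with $H^1(C,E_i^\vee\otimes\phi^*\cO(n_0))=0$, and then Lemma \ref{bbd}(2) with $L=\phi^*\cO(n_0)$ yields $\deg F\le(|\deg L|+|1-g|)\,v_i=:B_i$ for every subsheaf $F\subseteq E_i$, so $\mu_{\max}(E_i)\le B_i$ and $\mu_{\max}(E_1)\le\max_i B_i=:B$. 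The remaining slope inequalities are routine; the substance is this bounded-path-length reduction fed into the cohomological estimates of Lemma \ref{bbd}.
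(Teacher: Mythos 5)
Your argument is correct, and its skeleton is essentially the paper's: you prove $(1)\Rightarrow(4)$ from $\deg\langle E_0\rangle_i\ge 0$ (Definition \ref{lowbd}) by comparing $\mu_\delta(\langle E_0\rangle)$ with $\mu_\delta(E)$, you identify $(3)\Leftrightarrow(4)$ fibrewise via Theorem \ref{King} with $\theta=(1,\dots,1)$ (the paper compresses this to the statement $\langle E_0\rangle|_p=\langle (E_0)|_p\rangle$ for general $p$), and you close the cycle with a slope estimate for subobjects with vanishing framing, fed by Lemma \ref{bbd}. The genuine divergence is in how the hypothesis of Lemma \ref{bbd}(1) — a uniform $m_0$ with $H^1(C,E\otimes\phi^*\cO(m_0))=0$ — is obtained. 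The paper quotes the boundedness established in Theorem \ref{bd} (via Harder--Narasimhan filtrations and Simpson's criterion), whereas you derive it directly from condition $(4)$: generation of the generic fibre by paths of length at most $N=\sum_i v_i$ exhibits each $E_i$, up to torsion quotient, as a quotient of a fixed bundle built from the finitely many line bundles $M_p^\vee$, giving the $H^1$-vanishing after a fixed twist. This is self-contained and has a real advantage of transparency: in the step $(3)/(4)\Rightarrow(2)$ one needs the degree bound for quasimap-stable bundles \emph{before} any $\delta$-semistability is known, and your route makes the uniformity over that class explicit rather than inherited from the boundedness statement for $\delta$-(semi)stable objects; the paper's route is shorter given Theorem \ref{bd}, yours is more elementary and removes any appearance of circularity. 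Your bookkeeping (the two thresholds $\delta_0',\delta_0''$ in place of the paper's single inequality, and the observation that subobjects have $E'_0=0$ or $E'_0=E_0$ because the framing maps are block scalars) is consistent with the paper's conventions for $Z_\delta$ and $\mu_\delta$, so no gap remains.
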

\begin{proof} 
Let $v_1 := \sum _{i\in Q_0'} v_i$. If $v_1=0$, then the proof is obvious since in this case
any $\bM$-twisted sheaf is both $\delta$-stable and quasimap-stable.
Thus, we assume that $v_1\ne 0$.
  
 (3) $\Leftrightarrow$ (4): This follows from that $\theta$-stability of $E |_p$ for general $p\in C$
 if and only if $\langle E_0 \rangle |_p = E|_p$ for general $p\in C$
 if and only if $\langle E_0\rangle = E$.

For a $\delta$-semistable $\bM$-twisted quiver coherent sheaf $\{E_i\}$ with the numerical data
or for a $\bM$-twisted quasimap $(P, u)$ to $\XXX$ with the numerical data,
let $E:= \oplus _i E_i$ or $E:=\oplus _i P\times _G \CC ^{v_i}$ as a coherent sheaf of $\cO _C$-modules.
By the boundedness in Theorem \ref{bd} and Theorem \ref{twisted-symmetric}(1), 
such $E$'s are bounded. Therefore by Lemma \ref{bbd},
there is a number $m_0$ for which 
the condition $H^1(C, E\ot \phi ^*\mathcal{O}(m_0))=0$ in Lemma \ref{bbd} holds.
(In fact, such $E^\vee$'s are also bounded.)
Let $N=2(|\deg L|   + |1-g|)\rank E$
in Lemma \ref{bbd} with $L=\phi ^*\mathcal{O}(n_0)$.
Let $\Delta$ be a bound of $2|\deg \langle E_0 \rangle |$ which is finite since $E$ is bounded
and the construction of $\langle E_0\rangle $ from $E$ can be lifted in the level of family.

Denote $(-\infty, +\infty)$-valued slope functions $\mu _{(1)}$ and $\mu _{(2)}$ by
 $$\mu _{(1)}(v',d') = 2d'/v'_1 \ \ \text{ and } \ \  \mu _{(2)}(v', d') = 2v'_0/v'_1 \ \ \text{ for } v'_1 > 0 .$$
 Note that $2\mu_{\delta}(E)=\delta \mu _{(2)}(E)+\mu _{(1)}(E)$.
Take any number $\delta _0$ such that 
\begin{equation}\label{ineq}
\delta _0  \mathrm{Min}_{0\neq v'<v} |\mu _{(2)} (v) - \mu _{(2)} (v') | >  N+
 |\mu _{(1)} (E)| +\Delta, \end{equation} for any pair $v'=(v_0',v_1')$ of  integers satisfying
 $v_0'=v_0$ or $0$,  $0 < v_1'\le v_1$, and $v'\ne v$.

(1) $\Rightarrow$ (4): If $\langle E_0\rangle \neq E$, then $\mu _{(2)}(\langle E_0\rangle) > \mu _{(2)}(E)$,
which implies that $  \delta (\mu _{(2)} (E)-\mu _{(2)}(\langle E_0\rangle )) < - |\mu _{(1)} (E)| -\Delta$ by (\ref{ineq}).
Hence $ \mu _{\delta} (E) <\mu _{\delta} (\langle E_0\rangle )$. 

(3) $\Rightarrow$ (2): First, recall that for a point $p\in W \subset X$
the $\theta$-stabilities as a point in $W$ and  in $X$ coincide as seen in the proof of Proposition \ref{NC}.
Let $E'$ be a nonzero quiver subsheaf of $E$ and let $E$ be stable as a $\bM$-twisted quasimap. 
Since $\mu _{(2)}(E')\le \mu _{(2)}(E)$, which implies that $\mu _{\delta} (E') < \mu _{\delta} (E)$ 
by  (\ref{ineq}).
\end{proof}

\begin{Rmk}
Theorem \ref{bd} and Proposition \ref{asym} generalize the corresponding Diaconescu's works for ADHM case in \cite{Dia1, Dia}.
\end{Rmk}

\end{document}